\DeclareMathOperator{\PSL}{PSL}
\DeclareMathOperator{\PSO}{PSO}
\DeclareMathOperator{\SO}{SO}
\DeclareMathOperator{\PO}{PO}
\DeclareMathOperator{\GL}{GL}
\newcommand{\SOAlg}{\mathbf{SO}}
\DeclareMathOperator{\Orth}{O}
\newcommand{\Hyp}{\mathbb{H}}
\newcommand{\G}{\mathbf{G}}
\newcommand{\Res}{\mathbf{Res}}
\DeclareMathOperator{\Gal}{Gal}
\newcommand{\gps}{\textsc{gps}}
\newcommand{\R}{\mathbb{R}}
\newcommand{\Q}{\mathbb{Q}}
\newcommand{\Z}{\mathbb{Z}}
\newcommand{\N}{\mathbb{N}}
\newcommand{\Int}{\mathcal{O}}
\DeclareMathOperator{\Isom}{Isom}
\newcommand{\FD}{\mathcal{F}}
\newcommand{\Gbar}{\overline{G}}
\newcommand{\id}{\mathrm{id}}
\newcommand{\Zclosure}[2]{\overline{#1}^{(#2)}}
\renewcommand{\epsilon}{\varepsilon}
\renewcommand{\setminus}{\smallsetminus}
\renewcommand{\leq}{\leqslant}
\renewcommand{\geq}{\geqslant}
\theoremstyle{theorem}
\newtheorem{theorem}{Theorem}[section]
\newtheorem{theorem*}[theorem]{Theorem}
\newtheorem{proposition}[theorem]{Proposition}
\newtheorem{corollary}[theorem]{Corollary}
\theoremstyle{definition}
\newtheorem{definition}[theorem]{Definition}
\theoremstyle{remark}
\newtheorem*{remark}{Remark}
\newtheorem*{claim}{Claim}
\title{Quasi-arithmeticity of lattices in $\PO(n,1)$}
\author{Scott Thomson}
\date{20th May 2015}
\address{Max Planck Institut f\"ur Mathematik, Vivatsgasse 7, 53111 BONN, Germany}
\email{sathomson@mpim-bonn.mpg.de}
\email{s.a.thomson@dunelm.org.uk}
\thanks{The author was supported during the earlier part of this work by Swiss NSF grant \texttt{200021\_144438}, and is grateful to the Max Planck Institute for Mathematics (Bonn) for its hospitality and financial support during the preparation of the manuscript.}
\keywords{quasi-arithmetic group, arithmetic group, lattice, hyperbolic manifold}
\subjclass[2010]{22E40 (primary), 20H10, 11F06 (secondary).}
\begin{document}

\maketitle

\begin{abstract}
	We show that the non-arithmetic lattices in $\PO(n,1)$ of Belolipetsky and Thomson \cite{BT:Systoles}, obtained as fundamental groups of closed hyperbolic manifolds with short systole, are quasi-arithmetic in the sense of Vinberg, and, by contrast, the well-known non-arithmetic lattices of Gromov and Piatetski-Shapiro are \emph{not} quasi-arithmetic. A corollary of this is that there are, for all $n\geq 2$, non-arithmetic lattices in $\PO(n,1)$ that are not commensurable with the Gromov--Piatetski-Shapiro lattices.
\end{abstract}

\setcounter{tocdepth}{1}

\tableofcontents

\section{Introduction}

\subsection{Background, motivation and discussion}\label{sec:discussion}

The reader will find precise statements of theorems in \S\ref{sec:summary} (p.\pageref{sec:summary}), following the definitions given in \S\ref{sec:defs}.
Beforehand, we give some background and context.

\par The study of locally symmetric spaces is often re-framed as the study of discrete subgroups of semisimple Lie groups and in particular of those groups acting with finite co-volume (which are usually called `lattices').
A well known source of examples of lattices in semisimple Lie groups are the `arithmetic' lattices; i.e., subgroups of algebraic groups (over $\Q$) that are defined over $\Z$.
That the arithmetic subgroups of semisimple Lie groups have finite co-volume was shown by A.~Borel and Harish-Chandra \cite{Borel-Harish-Chandra}, who also obtained co-compactness criteria.
(The co-compactness criteria were also obtained by G.~Mostow and T.~Tamagawa \cite{Mostow-Tamagawa}.)
If the real rank of a semisimple Lie group $G$ is at least~$2$, then by results of G.~Margulis it is known that any lattice in $G$ is arithmetic \cite[(A), p.298]{Margulis}.
For the case of real rank~$1$ Lie groups, and in particular for $\PO(n,1)$ (the isometry group of real hyperbolic $n$-space), it was not until the work of M.~Gromov and I.~Piatetski-Shapiro \cite{GPS} that one knew that there are non-arithmetic lattices in $\PO(n,1)$, for every $n\geq 2$, alongside the previously known arithmetic ones.
The examples of Gromov and Piatetski-Shapiro arise as fundamental groups of finite-volume hyperbolic manifolds, constructed by `gluing' together pieces of non-commensurable arithmetic hyperbolic manifolds along isometric totally geodesic boundaries.
Before this point, examples of non-arithmetic lattices in $\PO(n,1)$ were known for some small $n$, and these arise as reflection groups whose non-arithmeticity may be deduced from criteria determined by \`{E}.~Vinberg \cite{Vinberg:DiscreteGroupsReflections} \cite[3.2, p.227]{VinbergShvartsman:Discrete}.

\par In his work on the arithmeticity of these reflection groups, Vinberg introduced the class of `quasi-arithmetic' lattices (in a given Lie group) \cite[p.437]{Vinberg:DiscreteGroupsReflections}.
(Note that `quasi-arithmeticity' is different from `sub-arithmeticity': cf.~\S\ref{sec:Sub-Arith}.)
Every arithmetic group is quasi-arithmetic, whilst on the other hand Vinberg himself gave examples of lattices that are quasi-arithmetic but \emph{not} arithmetic (lattices that we will call `properly quasi-arithmetic').
However, Vinberg's examples are only given for dimensions no greater than $4$, and it appears that his definition has not since been considered a great deal in the literature, save once, to the author's knowledge \cite{HLM:Borromean}.
Vinberg has, however, produced more examples of quasi-arithmetic groups in dimension $2$ (though without explicitly using this terminology) \cite{Vinberg:SomeExamples}.
Vinberg computes the rings of definition of these groups and infinitely many of these are not equal to $\Z$ and so hence infinitely many of his examples are non-arithmetic.

\par M.~Norfleet has also recently produced a family of examples of Fuchsian groups, which, being contained in $\PSL_{2}(\Q)$, are quasi-arithmetic \cite{Norfleet:ManyExamples}.

\par It was recently shown by M.~Belolipetsky and S.~Thomson \cite{BT:Systoles} that one may obtain a class of non-arithmetic lattices in $\PO(n,1)$, by a construction of closed hyperbolic manifolds with short closed geodesics:
\begin{theorem*}[B.--T.\ \cite{BT:Systoles}]
Let $\epsilon>0$ and let $n\geq 2$.
Then there exists a closed hyperbolic $n$-manifold $M$ such that $M$ contains a non-contractible closed geodesic of length less than $\epsilon$.
Moreover, for $\epsilon$ small enough (depending on $n$ and some parameters in the construction of $M$), $M$ is a non-arithmetic hyperbolic manifold.
\end{theorem*}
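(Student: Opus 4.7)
The plan is to construct $M$ by an inbreeding procedure in the spirit of Agol, and then to verify non-arithmeticity separately by invoking a lower systole bound for arithmetic lattices.

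First, I would start with a cocompact arithmetic lattice $\Gamma_0 < \PO(n,1)$ of simplest type, i.e., commensurable with the group of units of a quadratic form $q$ of signature $(n,1)$ defined over a totally real number field $k \neq \Q$ all of whose non-identity Galois conjugates yield positive definite forms. The form $q$ should be chosen so that $\Gamma_0$ contains many codimension-one arithmetic sub-lattices stabilising totally geodesic hyperplanes $H \subset \Hyp^n$, corresponding to orthogonal decompositions of $q$ into a one-dimensional and an $n$-dimensional sub-form defined over $k$. Next, I would select two such hyperplanes $H_1, H_2$ whose codimension-one stabilisers are commensurable and whose hyperbolic distance $d(H_1,H_2)$ is small; the density of the commensurator of $\Gamma_0$, or equivalently an equidistribution argument on rational vectors of suitable positive norm in the $q$-lattice, permits $d(H_1,H_2) < \delta$ for any prescribed $\delta > 0$.

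I then pass to a torsion-free finite-index subgroup $\Gamma \leq \Gamma_0$ such that the images of $H_1, H_2$ embed as disjoint, two-sided, non-separating totally geodesic hypersurfaces $N_1, N_2 \subset M_\Gamma := \Gamma \backslash \Hyp^n$. Cut $M_\Gamma$ along $N_1 \cup N_2$ and reglue the resulting boundary components via an isometric involution swapping the two sides. The reglued manifold $M$ inherits a closed hyperbolic structure, and the gluing creates a closed curve freely homotopic to a geodesic $\gamma$ of length comparable to $2\,d(H_1,H_2)$; taking $\delta < \epsilon/2$ gives length less than $\epsilon$, and a careful choice of cover ensures $\gamma$ is non-contractible.

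For non-arithmeticity when $\epsilon$ is small, I would appeal to a lower bound on the systole of a closed arithmetic hyperbolic $n$-manifold in terms of its invariant trace field (in the tradition of Borel and subsequent refinements). The construction constrains the potential arithmetic commensurability class of $M$: any hypothetical arithmetic structure on $\pi_1(M)$ would have invariant trace field tied to $k$, producing a definite systole lower bound $\epsilon_0 = \epsilon_0(n, k) > 0$. Choosing $\epsilon < \epsilon_0$ forces $M$ to be non-arithmetic. The main obstacle is precisely this last step: showing that any arithmetic structure on $M$ must have invariants controlled by $\Gamma_0$, so that a uniform systole bound applies. A secondary, more combinatorial, difficulty lies in organising the covers and gluing data to ensure $M$ is a genuine smooth orientable hyperbolic manifold and that $\gamma$ represents a non-trivial free homotopy class.
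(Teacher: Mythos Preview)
Your overall strategy matches the paper's account of the Belolipetsky--Thomson construction (which the paper cites rather than re-proves, sketching it in \S\ref{sec:ABT}): begin with a standard cocompact arithmetic lattice over a totally real field $K\neq\Q$, choose two nearby $K$-rational hyperplanes, pass to a congruence subgroup so that they project to disjoint embedded hypersurfaces, perform a cut-and-glue to create a short closed geodesic, and deduce non-arithmeticity from a lower systole bound for arithmetic lattices once the field of definition is pinned down. Your identification of the ``main obstacle''---controlling the invariants of any putative arithmetic structure on $M$ so that the systole bound applies---is exactly the point the paper defers to \cite[5.1]{BT:Systoles}.

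The gluing step, however, is not what the paper does, and as written it has a gap. The paper does \emph{not} cut along $N_{0}\cup N_{1}$ and ``reglue via an isometric involution swapping the two sides''. Instead, after cutting along both hypersurfaces it retains the connected piece $M'$ containing the common perpendicular segment $c$ between $H_{0}$ and $H_{1}$, and then \emph{doubles} $M'$ across its entire boundary to obtain $M=DM'$. The double automatically contains the closed geodesic $Dc$ of length at most $\epsilon$, and its reflection symmetry is precisely what later (Proposition~\ref{prop:QuasiArith}) realises the second vertex group as a $K$-rational conjugate of the first. Your regluing, by contrast, appears to require an isometry between $N_{1}$ and $N_{2}$, which ``commensurable stabilisers'' does not provide; and the phrase ``swapping the two sides'' is ambiguous once the cut produces up to four boundary components. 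The insistence on non-separating hypersurfaces is also unnecessary: the paper allows either case, and in fact in the Claim within the proof of Proposition~\ref{prop:QuasiArith} it passes to a cover where the hypersurfaces \emph{do} separate, in order to embed $\pi_{1}(M')$ into the arithmetic lattice.

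So: replace your regluing by the doubling of the piece $M'$ between the two hyperplanes, and drop the non-separating hypothesis. The rest of your outline is aligned with the paper.
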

\noindent This construction involves `gluing' in the spirit of Gromov and Piatetski-Shapiro, but in these more recent examples the manifolds being glued together are still commensurable with one another and so one might expect that the `non-arithmeticity' that arises should be `weaker' than that of Gromov and Piatetski-Shapiro.
Here, we show that the non-arithmetic examples of Belolipetsky and Thomson are, in fact, properly quasi-arithmetic lattices (\S\ref{sec:QAShortSystole}).

\par In order to establish a distinction between the two above classes of examples of non-arithmetic lattices, we also show here that the lattices of Gromov and Piatetski-Shapiro are \emph{not} quasi-arithmetic (\S\ref{sec:GPSnonQuasiArith}).

\subsubsection*{Acknowledgements}
The author is grateful to V.~Emery for a question that led to the writing of this article, and to J.~Raimbault for some helpful comments, as well as to the Max Planck Institut f\"{u}r Mathematik for its hospitality and financial support.
The author was supported during the earlier part of this work by Swiss NSF grant \texttt{200021\_144438}.
Many thanks also go to an anonymous referee who offered several useful comments and references.

\subsection{Definitions and foundational material}\label{sec:defs}
Here we give a definition of quasi-arithmetic lattices and establish some notation for the rest of the article.
For a general reference on the theory of arithmetic groups the reader may wish to consult the books of Margulis \cite{Margulis} and Platonov and Rapinchuk \cite{PlatonovRapinchuk}; whereas for an introduction to the subject, the book of Morris \cite{WitteMorris} (for example) is directed towards the newcomer.

\begin{definition}\label{def:admissible}
	Let $\Gbar$ be a connected semisimple real Lie group without compact factors and with trivial centre.
	We say that an algebraic group $\G$ is \emph{admissible for $\Gbar$} if
	\begin{enumerate}
		\item	$\G$ is defined over $\Q$;
		\item	there exists a \emph{surjective} homomorphism $\phi\colon \G(\R)^{\circ} \to \Gbar$; and
		\item	the homomorphism $\phi$ has compact kernel.
	\end{enumerate}
\end{definition}
We think of the algebraic group $\G$ as a model for the Lie group $\Gbar$, whose algebraic structure allows us to obtain, easily, interesting classes of subgroups as described below.
From a geometric point of view, we are interested in these subgroups' images in $\Gbar$.

\par Recall that a discrete subgroup $\Gamma<\Gbar$ is a \emph{lattice} if the quotient $\Gamma\backslash\Gbar$ has finite volume induced by the Haar measure on $\Gbar$.

\newcounter{templistcounterST}
\begin{definition}\label{def:quasiarith}
	Suppose that $\Gbar$ is as in Definition~\ref{def:admissible} and that $\Gamma< \Gbar$ is a lattice.
	We say that $\Gamma$ is \emph{quasi-arithmetic} if
	\begin{enumerate}
		\item	there exists an admissible algebraic group $\G$ for $\Gbar$; and
	\item\label{2ndstatement}	there exists a finite-index subgroup $\Gamma'\leq \Gamma$ such that $\Gamma'\subseteq \phi\bigl(\G(\Q)\bigr) $.
	\end{enumerate}
	We will say that $\Gamma$ is \emph{arithmetic} if, in addition to being quasi-arithmetic, the following stronger statement holds:
	\begin{itemize} 
		\item\label{3rdstatement}	$\Gamma$ is commensurable with $\phi\bigl(\G(\Z)\bigr) $.
	\end{itemize}
	If $\Gamma$ is quasi-arithmetic but not arithmetic then it is called \emph{properly quasi-arithmetic}.
\end{definition}

\noindent Note that arithmeticity implies quasi-arithmeticity.
On the other hand Theorem~\ref{thm:quasiarithsystole} shows that, in $\PO(n,1)$ (for any $n\geq 2$), quasi-arithmeticity does not imply arithmeticity (cf.~\S\ref{sec:discussion}) and so the class of properly quasi-arithmetic lattices is non-empty in this context.

\begin{remark}
The term `quasi-arithmetic' should not be confused with the term `sub-arithmetic', introduced by Gromov and Piatetski-Shapiro.  (Cf.~\S\ref{sec:Sub-Arith}.)
\end{remark}

\subsubsection{Hyperbolic space}
We adopt the Lorentz model of hyperbolic $n$-space, where we first equip $\R^{n+1}$ with the quadratic form $f_{n}$,  given in the standard basis by
\[ f_{n}(x) = -x_{0}^{2} + x_{1}^{2} + \cdots + x_{n}^{2} , \]
and then denote
\[ \Hyp^{n} = \bigl\{ x\in \R^{n+1} \mid f_{n}(x) = -1 \text{ and } x_{0} > 0 \bigr\} \]
with metric $\mathrm{d}_{\Hyp^{n}}$ derived from the Lorentzian inner product $(\cdot,\cdot)$ associated to the quadratic form $f_{n}$ by
\[ \cosh\bigl( \mathrm{d}_{\Hyp^{n}}(x,y) \bigr) = - (x,y). \]
We then have $\Isom(\Hyp^{n}) \cong \PO(n,1) = \Orth(\R^{n+1},f_{n}) / \{\pm 1\}$ where the equality is by definition and where $\Orth(\R^{n+1},f_{n})$ is the orthogonal group $\{g\in\GL_{n}(\R) \mid f(g(x)) = f(x) \;\;\forall x\in \R^{n+1}\}$ \cite{Ratcliffe}.
The group of orientation-preserving isometries of $\Hyp^{n}$ (denoted $\Isom^{+}(\Hyp^{n})$) is isomorphic to $\PSO(n,1)$, which is in turn defined to be the quotient $\SO(\R^{n+1},f_{n}) / \{\pm 1 \}$: this has index $2$ in $\PO(n,1)$.
The groups $\PO(n,1)$ and $\PSO(n,1)$ are real Lie groups.
If $K$ is a totally real algebraic number field (with a fixed embedding $K\subset\R$), and if $f$ is any quadratic form over $K$ with signature $(n,1)$ then we may consider the algebraic group $\SOAlg_{f}$, and we have a Lie group isomorphism $\SOAlg_{f}(\R)^{\circ}\to\PSO(n,1)$.
We think of $\PSO(n,1)$ as a fixed concrete realisation of $\Isom^{+}(\Hyp^{n})$ and $\SOAlg_{f}(\R)$ as an algebraic `model' over $K$.

\par In what follows we will have $G=\Isom(\Hyp^{n})=\PO(n,1)$.
Then, the group $\Gbar = G^{\circ}$ will satisfy the hypothesis of Definition~\ref{def:admissible}.
We will not be considering non-orientable lattices here, though we will need to consider some isometries lying in $G \setminus G^{\circ}$.

\subsubsection{Standard arithmetic lattices}
As before, we still have $G=\Isom(\Hyp^{n})$ and $\Gbar=G^{\circ}$.
If $\G$ is some algebraic group over a number field $K$, then we can form a related algebraic group over $\Q$, denoted $\Res_{K/\Q}(\G)$, such that $\G(K) \cong \Res_{K/\Q}(\G)(\Q)$.
If $K$ is totally real, of degree $d$ over $\Q$, then we have $\G(K\otimes_{\Q}\R) \cong \Res_{K/\Q}(\G)(\R)$; moreover since $K\otimes_{\Q}\R \cong \R^{d}$ we obtain a direct product
\begin{equation}\label{eqn:restrscalars} \Res_{K/\Q}(\G)(\R) \cong \prod_{\sigma\in \Gal(K/\Q)} \G^{\sigma}(\R).
\end{equation}
The algebraic group $\Res_{K/\Q}(\G)$ is called the Weil restriction of $\G$, and we call $\Res_{K/\Q}$ the restriction of scalars functor \cite[p.50]{PlatonovRapinchuk} \cite[I.1.7]{Margulis}.

\begin{definition}
Fix some $n\geq 2$.
Let $K$ be a totally real number field, and let $f$ be a ($K$-valued) non-degenerate quadratic form on $K^{n+1}$.
Let $\SOAlg_{f}$ be the algebraic $K$-group determined by $ \{ x\in  \Orth(K^{n+1},f) \mid \det(x)=1 \}$.
We say that $(K,f)$ is an \emph{admissible field-form pair} if the group $\Res_{K/\Q}(\SOAlg_{f})$ is admissible for $\Gbar$.
Equivalently, by \eqref{eqn:restrscalars}, the pair $(K,f)$ is admissible if $\SOAlg_{f}(K\otimes_{\Q}\R)^{\circ}$ is isomorphic to $\Gbar$, modulo a compact kernel.
\end{definition}

\noindent (One could do away with reference to $K$ as this field is implicit in the definition of $f$.
We keep $K$ for emphasis on the field of definition, especially as we will later have occasion to consider extensions of scalars of the form $K\otimes L$. )

\par In more concrete terms, we may fix some basis for $K^{n+1}$, so realising $f$ as a homogeneous quadratic polynomial with coefficients in $K$.
We also regard $K$ as embedded in $\R$.
Then for $(K,f)$ to be admissible for $\PO(n,1)$ requires that $f$ has signature $(n,1)$ on $\R^{n+1}$ and that for all elements $\sigma\in\Gal(K/\Q) \setminus\{\id\}$ the conjugate forms $f^{\sigma}$ (obtained by applying $\sigma$ to the coefficients of $f$) are positive definite.

\par If $(K,f)$ is an admissible pair, and if $\Int_{K}$ denotes the ring of integers in $K$, then it is well-known that $\SOAlg_{f}(\Int_{K})$ can be realised as an arithmetic lattice in $\PO(n,1)$ \cite[(3.2.7)]{Margulis}.
This follows from \eqref{eqn:restrscalars}, since $\Res_{K/\Q}(\SOAlg_{f})(\Z)$ is a lattice in $\Res_{K/\Q}(\SOAlg_{f})(\R)$ and on projecting to the non-compact factor (which is the \emph{only} such factor by admissibility of $(K,f)$), we obtain a lattice in $\SOAlg_{f}(\R)$, isomorphic to $\SOAlg_{f}(\Int_{K})$.
To realise a lattice in $\PO(n,1)$, note that we have a homomorphism $\SOAlg_{f}(\R)^{\circ} \to \PO(n,1)$ with finite co-kernel.

\par We call $\SOAlg_{f}(\Int_{K})$ the \emph{standard arithmetic lattice} associated to the pair $(K,f)$.
Lattices of this type may also be called arithmetic lattices of the simplest type \cite[p.217]{VinbergShvartsman:Discrete}. 
By abuse of notation we will also refer to $\PO_{f}(\Int_{K})$ as the associated standard arithmetic lattice, when we wish to work with the image of this lattice in $\PO_{f}(\R)$.

\subsection{Summary and results}\label{sec:summary}
In \S\ref{sec:ABT}, we will recall the construction of I.~Agol, generalised by Belolipetsky and Thomson, of hyperbolic manifolds with short closed geodesics, and then show that their associated lattices are quasi-arithmetic.
This leads to the following theorem:

\begin{theorem}\label{thm:quasiarithsystole}
	For any admissible field-form pair $(K,f)$ there are infinitely many commensurability classes of \emph{properly} quasi-arithmetic lattices $\Gamma < \PO(n,1)$ arising from $\SOAlg_{f}(K)$.
\end{theorem}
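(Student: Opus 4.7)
The plan is to apply the Agol--Belolipetsky--Thomson construction to the standard arithmetic lattice $\Gamma_{0}=\SOAlg_{f}(\Int_{K})$ associated to $(K,f)$, verify that the resulting lattices sit inside $\phi(\SOAlg_{f}(K))$, deduce quasi-arithmeticity via Weil restriction, and deduce proper non-arithmeticity from the already-quoted B.--T.\ theorem. The remaining problem is then to separate the resulting family into infinitely many commensurability classes.

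First I would set $M_{0}=\Gamma_{0}\backslash\Hyp^{n}$ and fix a totally geodesic codimension-one hypersurface $\Sigma\subset M_{0}$ cut out by a $K$-rational hyperplane $W\subset\R^{n+1}$. The manifolds $M_{\epsilon}$ constructed in \S\ref{sec:ABT} arise by cutting (a finite cover of) $M_{0}$ along $\Sigma$ and regluing by a small isometry $\tau$ of $\Sigma$. The key structural point, to be checked by inspecting the construction, is that $\tau$ can always be chosen in $\SOAlg_{f}(K)$: in the B.--T.\ setup $\tau$ is a product of reflections in $K$-rational subspaces of $W$ meeting at small angle (these small angles being precisely what produce the short closed geodesics), and each such reflection visibly has matrix entries in $K$. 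Consequently the resulting lattice $\Gamma_{\epsilon}$, generated by elements of $\Gamma_{0}$ and by conjugates of $\tau$, lies in $\phi(\SOAlg_{f}(K))$.

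Setting $\G=\Res_{K/\Q}(\SOAlg_{f})$, admissibility of $(K,f)$ makes $\G$ admissible for $\Gbar$, and the canonical identification $\G(\Q)\cong\SOAlg_{f}(K)$ upgrades the previous inclusion to $\Gamma_{\epsilon}\subset\phi(\G(\Q))$, verifying Definition~\ref{def:quasiarith} (after replacing $\Gamma_{\epsilon}$ by $\Gamma_{\epsilon}\cap\Gbar$ if orientation-reversing elements appear). Non-arithmeticity of $\Gamma_{\epsilon}$ for sufficiently small $\epsilon$ is then inherited directly from the B.--T.\ theorem quoted in \S\ref{sec:discussion}, so $\Gamma_{\epsilon}$ is properly quasi-arithmetic.

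For the last step, which I expect to be the main obstacle, I would take $\epsilon_{k}\to 0$ and argue that the systole bounds each commensurability class from below. Concretely, any two commensurable sub-lattices of $\phi(\SOAlg_{f}(K))$ share a finitely generated finite-index subgroup lying in $\phi(\SOAlg_{f}(\Int_{K}[1/S]))$ for some finite prime set $S$; the largest eigenvalue $\lambda$ of any hyperbolic element of this subgroup is an algebraic number of degree at most $(n+1)[K:\Q]$ whose non-identity Galois conjugates lie on the unit circle (by admissibility of $(K,f)$), and a Salem--Lehmer/Northcott type finiteness argument provides a positive lower bound on $\log|\lambda|$ depending only on $S$, $K$, and $n$. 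Since $\ell(\Gamma_{\epsilon_{k}})\to 0$, the $\Gamma_{\epsilon_{k}}$ must meet infinitely many commensurability classes. The delicacy, and the reason this step is the main obstacle, is that in the quasi-arithmetic setting the ambient $S$-integral ring depends on the commensurability class, so some uniformity is needed across all of $\phi(\SOAlg_{f}(K))$ rather than merely within a single arithmetic envelope.
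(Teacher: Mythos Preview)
Your outline has two genuine problems, one in each half.

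\textbf{The construction.} Your description of the B.--T.\ manifolds is not what actually happens in \S\ref{sec:ABT}. There is no ``small isometry $\tau$ of $\Sigma$'' and no ``$K$-rational subspaces of $W$ meeting at small angle''. The short geodesic does not come from a small-angle rotation; it comes from choosing two \emph{disjoint} $K$-rational hyperplanes $H_{0},H_{1}\subset\Hyp^{n}$ at distance $\leq\epsilon/2$, passing to a congruence cover of $N=\Lambda\backslash\Hyp^{n}$ in which their images $N_{0},N_{1}$ are embedded and disjoint, cutting along both, keeping the piece $M'$ containing the common perpendicular segment $c$, and \emph{doubling} $M'$ along its boundary. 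The resulting short curve is $Dc$. Once you have the correct picture, the quasi-arithmeticity argument is close to your intuition but different in detail: $\Gamma=\pi_{1}(M)$ is the fundamental group of a graph of groups with vertex groups $\Gamma_{1}\cong\pi_{1}(M')\hookrightarrow\Lambda\subset\PO_{f}(\Int_{K})$ and $\Gamma_{2}=I_{0}^{-1}\Gamma_{1}I_{0}$, together with extra generators of the form $I_{j}^{-1}I_{0}$, where each $I_{j}$ is the reflection in a $K$-rational hyperplane. Since reflections in $K$-rational hyperplanes lie in $\PO_{f}(K)$, all generators lie in $\PO_{f}(K)$. Your ``$\tau$ is a product of reflections'' statement does not match this and would need to be rewritten from scratch.

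\textbf{Infinitely many commensurability classes.} Your Salem/Northcott approach is both harder than necessary and, as you yourself note, incomplete: the set $S$ of inverted primes varies with the commensurability class, so there is no uniform lower bound coming out of that argument without further input. The paper bypasses all of this with a one-line appeal to Margulis' commensurator theorem. If infinitely many of the non-arithmetic $\Gamma_{m}$ were pairwise commensurable, they would all sit inside their common commensurator, which by Margulis is itself a \emph{lattice} in $\PO_{f}(\R)$ (since the $\Gamma_{m}$ are non-arithmetic). A lattice is discrete, hence has a positive lower bound on translation lengths of hyperbolic elements; but the $\Gamma_{m}$ contain hyperbolic elements of translation length $\to 0$. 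This is the whole argument, and it requires no eigenvalue estimates at all.
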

\noindent (A proof of Theorem~\ref{thm:quasiarithsystole} is given on p.\pageref{pf:mainthmproof}.)

In \S\ref{sec:GPS} we will examine the construction of Gromov and Piatetski-Shapiro and show that the lattices so obtained are not quasi-arithmetic:

\begin{theorem}\label{thm:GPSnonQuasiArith}
	Let $\Gamma$ be a Gromov--Piatetski-Shapiro lattice.
Then $\Gamma$ is not quasi-arithmetic.
\end{theorem}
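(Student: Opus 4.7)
The plan is to argue by contradiction: the GPS gluing gives rise to Zariski-dense subgroups of $\Gamma$ whose simultaneous rationality over two incompatible admissible pairs contradicts admissibility. Suppose $\Gamma$ is quasi-arithmetic, so that there exist an admissible $\Q$-group $\G$, a surjection $\phi\colon\G(\R)^{\circ}\to\Gbar$ with compact kernel, and a finite-index subgroup $\Gamma'\leq\Gamma$ with $\Gamma'\subseteq\phi(\G(\Q))$. By the restriction-of-scalars description of admissible $\Q$-groups for $\PO(n,1)$, we may take $\G=\Res_{K/\Q}(\SOAlg_f)$ for some admissible pair $(K,f)$; under this identification, $\phi(\G(\Q))$ corresponds to the image of $\SOAlg_f(K)$ inside $\Gbar$ via the distinguished real place of $K$.

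Next I unpack the GPS construction: $\Gamma=\pi_1(M)$ with $M=M_1\cup_W M_2$ assembled from two pieces $M_i$ of finite-volume arithmetic hyperbolic manifolds $N_i$ whose standard arithmetic structures arise from \emph{non-commensurable} admissible pairs $(K_1,f_1)$ and $(K_2,f_2)$, with $M_1,M_2$ glued along isometric totally geodesic boundaries. By van Kampen, each $\pi_1(M_i)$ injects into $\Gamma$, and I set $\Delta_i:=\Gamma'\cap\pi_1(M_i)$, a finite-index subgroup of $\pi_1(M_i)$. Since $\pi_1(M_i)$ contains both a lattice in the boundary stabiliser $\PO(n-1,1)$ and elements not preserving the boundary hyperplane, and since $\PO(n-1,1)$ is a maximal proper algebraic subgroup of $\PO(n,1)$, $\Delta_i$ is Zariski-dense in $\Gbar$. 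By the GPS arithmetic structure $\Delta_i$ lies inside the image of $\SOAlg_{f_i}(K_i)$ in $\Gbar$; by the quasi-arithmetic hypothesis it lies inside the image of $\SOAlg_f(K)$.

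The crux is then a field-of-definition step: a Zariski-dense subgroup $\Delta_i$ of $\Gbar$ that is rational over both admissible pairs $(K,f)$ and $(K_i,f_i)$ should force these pairs to coincide up to similarity. This follows from an adjoint-trace-field (Vinberg) argument: the minimal field of definition of $\Delta_i$ as a Zariski-dense subgroup of the adjoint group is intrinsic, and the admissibility constraint (conjugates of $f$ positive definite save at the distinguished place) rigidifies the ambient algebraic group up to similarity of its defining form. Applied to $i=1,2$ this gives $(K,f)\sim(K_1,f_1)\sim(K_2,f_2)$, contradicting non-commensurability. The main obstacle is carrying out this last step cleanly: one must rule out that the compositum $K\cdot K_i$ properly extends either field (which would introduce extra real places incompatible with admissibility of the relevant restriction of scalars) and ensure that the two Zariski-dense realisations of $\Delta_i$ are genuinely matched by a $\Q$-algebraic isomorphism of the ambient $\Q$-structures, rather than merely by a real-analytic conjugation.
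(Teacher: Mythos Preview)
Your approach is essentially the paper's: argue by contradiction, use Zariski density of each piece $\Gamma_i=\pi_1(M_i)$ in $\PO(n,1)$, and deduce that any ambient admissible pair $(K',f')$ with $\Gamma\subset\PO_{f'}(K')$ must satisfy $\PO_{f'}\cong_K\PO_{f_i}$ for both $i$, contradicting non-similarity of $f_1,f_2$. The obstacle you flag is handled in the paper by two simplifications: first, the paper's \emph{definition} of a GPS lattice already requires a common field $K_1=K_2=K$, so only the inclusion $K\subseteq K'$ needs checking; second, equality $K'=K$ is extracted directly from the admissibility constraint (the decomposition of $\Res_{K'/\Q}(\SOAlg_{f'})(\R)$ over $\Gal(K'/K)$ can have only one non-compact factor), rather than by an abstract appeal to the adjoint trace field.
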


\noindent The definition and construction of a `Gromov--Piatetski-Shapiro lattice' is given in \S\ref{sec:GPSConstruction}.
Theorems~\ref{thm:quasiarithsystole} and \ref{thm:GPSnonQuasiArith} together give us the following:
\begin{corollary}
There are infinitely many commensurability classes of non-arithmetic lattices in $\PO(n,1)$ (for $n\geq 2$) that are not commensurable with Gromov--Piatetski-Shapiro lattices.
\end{corollary}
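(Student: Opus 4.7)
The plan is to combine Theorems~\ref{thm:quasiarithsystole} and \ref{thm:GPSnonQuasiArith} by means of the elementary observation that quasi-arithmeticity is invariant under commensurability. First, fix any $n\geq 2$ and any admissible field-form pair $(K,f)$ (such pairs certainly exist for every $n\geq 2$, by choosing $K$ totally real of appropriate degree so that the conjugates $f^{\sigma}$ are positive definite for $\sigma\neq\id$). Applying Theorem~\ref{thm:quasiarithsystole} produces infinitely many commensurability classes of properly quasi-arithmetic lattices in $\PO(n,1)$. By definition of `properly quasi-arithmetic', each such lattice is non-arithmetic, so these commensurability classes already witness the desired non-arithmeticity; what remains is to show that they are disjoint from the commensurability class of any Gromov--Piatetski-Shapiro lattice.

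The key auxiliary step is to record that quasi-arithmeticity is preserved under commensurability. Suppose $\Gamma$ is quasi-arithmetic, witnessed by an admissible algebraic group $\G$, a surjection $\phi\colon \G(\R)^{\circ}\to\Gbar$ with compact kernel, and a finite-index subgroup $\Gamma'\leq \Gamma$ with $\Gamma'\subseteq \phi\bigl(\G(\Q)\bigr)$. If $\Gamma_{1}<\Gbar$ is commensurable with $\Gamma$, then $\Gamma_{1}\cap\Gamma'$ has finite index in $\Gamma_{1}$ and still lies inside $\phi\bigl(\G(\Q)\bigr)$; hence the very same $(\G,\phi)$ witnesses the quasi-arithmeticity of $\Gamma_{1}$. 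This is the only genuine content beyond the two cited theorems, and it follows immediately from the definition.

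Finally, Theorem~\ref{thm:GPSnonQuasiArith} asserts that no Gromov--Piatetski-Shapiro lattice is quasi-arithmetic. Combined with the invariance just established, this implies that no lattice commensurable with a GPS lattice is quasi-arithmetic. Consequently none of the infinitely many commensurability classes of properly quasi-arithmetic lattices furnished by Theorem~\ref{thm:quasiarithsystole} can contain a GPS lattice, yielding the corollary. There is no substantive obstacle here: the proof is essentially the formal consequence of the two main theorems of the paper together with the one-line commensurability observation.
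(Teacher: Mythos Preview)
Your proof is correct and follows exactly the approach the paper intends: the corollary is stated as an immediate consequence of Theorems~\ref{thm:quasiarithsystole} and~\ref{thm:GPSnonQuasiArith}, and the only additional ingredient---that quasi-arithmeticity is a commensurability invariant---is precisely the one-line observation you supply.
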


\section{Hyperbolic manifolds with short systole}

If $M$ is a non-simply-connected closed Riemannian manifold, then the \emph{systole} of $M$ is by definition the (length of) the shortest closed non-trivial curve in $M$.
One may show that the systole of such an $M$ is always positive \cite[p.39]{Katz:Systolic}.

\par In this section we will revisit a recent construction of closed hyperbolic $n$-manifolds with `short' systole, and then see that the corresponding lattices (i.e., fundamental groups) in $\PO(n,1)$ are quasi-arithmetic.

\subsection{Constructing hyperbolic manifolds with short systole}\label{sec:ABT}

Let $n\geq 2$ and $\epsilon > 0$.
We will describe how one can construct a closed hyperbolic $n$-manifold with systole at most $\epsilon$, as per the construction of Belolipetsky and Thomson \cite{BT:Systoles}.
This will allow us to establish some notation.
The method is a generalisation of a construction by I.~Agol in dimension~$4$ \cite{Agol:systoles}.
(It was also pointed out by N.~Bergeron, F.~Haglund and D.~Wise that some of their own methods allowed Agol's $4$-dimensional construction to be generalised to every dimension using subgroup separability arguments \cite[Remark on p.17]{BHW}.)

\par Fix some admissible pair $(K,f)$ with $K\neq \Q$, and let $\Lambda$ be a torsion-free subgroup of the associated standard arithmetic lattice in $\PO(n,1)$, so that $\Lambda\backslash\Hyp^{n}$ is a compact hyperbolic $n$-manifold $N$.
So, for some Galois embedding $\sigma\colon K\to \R$, the form $f^{\sigma}$ on $K^{n+1} \otimes_{\sigma(K)} \R$ has signature $(n,1)$ and we can isometrically identify $\Hyp^{n}$ with the more convenient model
\[ \Hyp_{f}^{n} = \bigl\{  x\in K^{n+1} \otimes_{\sigma(K)} \R \mid f(x) < 0 \bigr\} / \sim , \] 
where $x\sim \lambda x$ for all $x\in K^{n+1} \otimes_{\sigma(K)} \R$ and all $\lambda\in\R\setminus{0}$.
In this model, we have $\Isom(\Hyp^{n}_{f}) \cong \PO_{f}(K\otimes_{\sigma(K)}\R)$.
We will continue to refer to $\Hyp^{n}$ rather than $\Hyp^{n}_{f}$.
Let $H_{0}$ and $H_{1}$ be two disjoint `$K$-rational' hyperplanes in $\Hyp^{n}_{f}$; that is, choosing two vectors $v_{0}$ and $v_{1}$ in $K^{n+1}$ with $f(v_{i})>0$ for $i=0,1$, let $H_{i} = \langle v_{i}\rangle^{\perp_f} \cap \Hyp^{n}$, and suppose that $H_{0} \cap H_{1}=\emptyset$.
If one is interested in obtaining a manifold with short systole then one chooses the $v_{i}$ so that the $H_{i}$ are at most hyperbolic distance $\epsilon /2 $ apart.
The projections $\pi(H_{i}) \subseteq N$ are immersed totally geodesic hypersurfaces in $N$, but they might not be embedded and they might intersect eachother.
However, by replacing $\Lambda$ by a suitable finite-index (congruence) subgroup this can be avoided, so that $H_{i}$ ($i=0,1$ respectively) projects to a totally geodesic embedded hypersurface $N_{i}\subset N$ ($i=0,1$ respectively), and so that $N_{0}\cap N_{1} = \emptyset$ \cite[Lem.~3.1]{BT:Systoles}.

\par We now cut along the two hyperplanes $N_{i}$ to obtain a manifold with boundary.
Keeping the connected component $M'$ that contains the common perpendicular geodesic segment $c$ between the two $H_{i}$, we double along the boundary $B$ of $M'$ to obtain a manifold $M=DM'$ \cite[p.226]{Lee:SmoothMflds}.
Then $M$ contains the closed geodesic $Dc$ of length at most $\epsilon$. 
We will suppose that $B$ has $\ell$ connected components, and depending on whether or not the cutting separates the manifold $N$, we have $2\leq \ell \leq 4$.

\par Thus $M$ is a compact hyperbolic $n$-manifold that can be written as $\Gamma\backslash\Hyp^{n}$ for some lattice $\Gamma\in \PO_{f}(K\otimes_{\sigma(K)}\R)$.
If $\epsilon$ is small enough then the manifold $M$ (having systole at most $\epsilon$) is non-arithmetic \cite[5.1]{BT:Systoles}.

\subsection{Quasi-arithmeticity of short systole manifolds}\label{sec:QAShortSystole}

Let $M$ be as in \S\ref{sec:ABT}, with sufficiently short systole that it is non-arithmetic, and write $M=\Gamma\backslash\Hyp^{n}$.
We still suppose $(K,f)$ to be an admissible pair for $\PO(n,1)$.
We prove the following:

\begin{proposition}\label{prop:QuasiArith}
	The group $\Gamma$ can be generated by elements in $\PO_{f}(K)$ and so is quasi-arithmetic.	
\end{proposition}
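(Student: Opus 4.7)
The plan is to realise $\Gamma$ as an index-$2$ subgroup of a reflection-orbifold group whose generators are manifestly in $\PO_f(K)$. Doubling $M'$ along $B$ is equivalent to silvering $\partial M'$ to form an orbifold $\mathcal{O}$ whose orientation double cover is $M$; correspondingly, the orbifold fundamental group $\Gamma_{\mathrm{orb}} = \pi_1^{\mathrm{orb}}(\mathcal{O})$ contains $\Gamma = \pi_1(M)$ as the index-$2$ orientation-preserving subgroup, so it suffices to prove that $\Gamma_{\mathrm{orb}} \subseteq \PO_f(K)$.

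Fixing a lift $\tilde M' \subset \Hyp^n$ of $M'$---a convex region bounded by $\ell$ hyperplanes, each a $\Lambda$-translate of $H_0$ or $H_1$---the Poincar\'{e} polyhedron theorem applied to the mirrored polytope $\tilde M'$ (or, equivalently, the van Kampen description of the double as an amalgam of two copies of $\pi_1(M')$ along the boundary subgroups) exhibits $\Gamma_{\mathrm{orb}}$ as generated by (i) the image of $\pi_1(M')$ in $\Isom(\Hyp^{n})$ under the inclusion $M' \hookrightarrow N$, together with (ii) the reflections $r_1,\dots,r_\ell$ across the $\ell$ bounding hyperplanes of $\tilde M'$. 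For~(i), convexity---and hence simple-connectedness---of $\tilde M'$ ensures that the inclusion $\pi_1(M') \hookrightarrow \pi_1(N) = \Lambda$, realised by deck transformations on $\tilde M'$, is injective, and $\Lambda \subseteq \PO_f(\Int_K) \subseteq \PO_f(K)$. For~(ii), each $r_i$ is the reflection $r_v$ through a vector $v \in K^{n+1}$ (a $\Lambda$-translate of $v_0$ or $v_1$), and since the formula
\[ r_v(x) = x - 2\,\frac{(x,v)}{(v,v)}\,v \]
is $K$-rational in~$v$, we have $r_v \in \PO_f(K)$. Hence $\Gamma_{\mathrm{orb}} \subseteq \PO_f(K)$, and therefore $\Gamma \subseteq \PO_f(K)$.

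Quasi-arithmeticity of $\Gamma$ then follows directly from Definition~\ref{def:quasiarith}: with $\G = \Res_{K/\Q}(\SOAlg_f)$, admissibility of the pair $(K,f)$ gives admissibility of $\G$ for $\Gbar$, and one identifies $\G(\Q) \cong \SOAlg_f(K)$ whose $\phi$-image in $\Gbar$ is (after passing to the orientation-preserving part) exactly the orientation-preserving elements of $\PO_f(K)$. Since $M$ is orientable, $\Gamma$ itself lies in $\phi(\G(\Q))$, so condition~(\ref{2ndstatement}) is satisfied with $\Gamma' = \Gamma$. The step I expect to be trickiest is item~(i) above---correctly realising the geometric $\pi_1(M')$ as a subgroup of $\Lambda$ via the chosen lift, and cleanly separating it from the reflections that generate the~$\Gamma_{\mathrm{orb}}/\Gamma$ extension while tracking the two-to-four $\Lambda$-orbits of boundary hyperplanes; by contrast, the $K$-rationality content in~(ii) is the elementary observation that reflection through a $K$-rational vector has $K$-rational matrix entries.
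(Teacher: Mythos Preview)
Your approach is correct and rests on the same two ingredients as the paper's proof: (i) the inclusion $\pi_1(M')\hookrightarrow\Lambda\subseteq\PO_f(K)$, and (ii) the $K$-rationality of reflections in the boundary hyperplanes. The packaging differs: the paper works directly with $\Gamma$, realises it as the fundamental group of a graph of groups with vertex groups $\Gamma_1$ and $\Gamma_2=I_0^{-1}\Gamma_1 I_0$, and then exhibits explicit orientation-preserving generators $I_j^{-1}I_0$ for the extra edge letters; you instead pass to the index-$2$ supergroup $\Gamma_{\mathrm{orb}}$ generated by $\pi_1(M')$ together with the bare reflections, which is a cleaner presentation of the same content. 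On point~(i) the paper is more cautious---it first passes to a double cover of $N$ in which the $N_i$ separate before asserting $\pi_1(M')\hookrightarrow\Lambda$---whereas your convexity argument (the component of the preimage of the interior of $M'$ in $\Hyp^n$ is convex, hence simply connected, hence is the universal cover of $M'$) handles the non-separating case directly. One small slip: your lift $\tilde M'$ is bounded not by $\ell$ hyperplanes but by infinitely many, falling into $\ell$ orbits under $\pi_1(M')$; this does not affect the argument, since one reflection per orbit suffices (with $\pi_1(M')$) to generate $\Gamma_{\mathrm{orb}}$.
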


\begin{proof} 
By construction, the manifold $M$ is a union $M_{1}\cup_{B} M_{2}$ (where $M_{i}$ is isometric to $M'$), so it has a symmetry $\tau$ that interchanges its two parts $M_{1}$ and $M_{2}$.
By this decomposition we see that the lattice $\Gamma \cong \pi_{1}(M)$ splits as the fundamental group of a graph of groups $\mathcal{G}$, where the graph $\mathcal{G}$ is the graph with two nodes and edges between the two nodes corresponding to each boundary component along which the double is taken.

That is, the edge groups are the fundamental groups of the boundary components and the vertex groups are the (isomorphic) groups $\Gamma_{1}$ and $\Gamma_{2}$.
(See Fig.~\ref{fig:GraphofGroups}.)

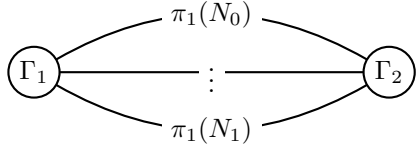
\begin{figure}\caption{The graph of groups $\mathcal{G}$ whose fundamental group is equal to that of the fundamental group of $M$.
		There are between $2$ and $4$ edges, one or two for $N_{0}$ and one or two for $N_{1}$.}\label{fig:GraphofGroups}
\begin{center}
\begin{tikzpicture}[node distance   = 4 cm]
  \useasboundingbox (-1,-1) rectangle (5,1.5); 
  \tikzset{VertexStyle/.style = {draw,
	  			thick,
	  			shape          = circle,
				fill     = white,
                                 text           = black,
                                 inner sep      = 2pt,
                                 outer sep      = 0pt,
                                 minimum size   = 18 pt}}
  \tikzset{EdgeStyle/.style   = {thick}}
  \tikzset{LabelStyle/.style =   {
	  thin,
                                  fill           = white,
                                  text           = black}}
	\node[VertexStyle](A){$\Gamma_{1}$};
	\node[VertexStyle,right=of A](B){$\Gamma_{2}$};
     \draw[EdgeStyle](A) to node[LabelStyle]{$\vdots$} (B);
     \tikzset{EdgeStyle/.append style = {bend left}}
     \draw[EdgeStyle](A) to node[LabelStyle]{$\pi_{1}(N_{0})$} (B);
     \draw[EdgeStyle](B) to node[LabelStyle]{$\pi_{1}(N_{1})$} (A);
  \end{tikzpicture}
\end{center}
\end{figure}

So, $\Gamma$ is generated by the elements of the two fundamental groups $\Gamma_{i} = \pi_{1}(M_{i})$ (for $i=1,2$), as well as some extra elements corresponding to the new homotopy classes that appear as a result of the gluing.
(See Serre~\cite{Serre:Trees} for the properties of graphs of groups.)

In what follows we describe how to concretely realise $\Gamma$ as a subgroup of $\PO_{f}(K)$.
\begin{claim}\label{claim:InjectivePi1}
	We may assume that $\Gamma_{1}$ can be identified with its image in $\Lambda$, on identifying $M_{1}$ with $M'\subset N$.
\end{claim}

\begin{proof}[Proof of claim]\label{rem:injectivepi1}
	\textit{A priori} it is not in general possible to make this latter identification; in particular if the hypersurfaces $N_{i}$ do not each separate $N$. 
But, there exists, nevertheless, a double cover $\pi\colon N'\to N$ such that each $\pi^{-1}(N_{i})$ \emph{does} separate $N$ \cite[2.8C]{GPS}.
Then, the piece obtained by cutting along both $\pi^{-1}(N_{i})$ is isometric to the original $M'$, and $\pi_{1}(M')\hookrightarrow \pi_{1}(N')$. 
Since $N'\to N$ is a finite cover, the discrete subgroup $\pi_{1}(N')<\pi_{1}(N)$ is still arithmetic.
(We may need to take a cover for each hypersurface $N_{i}$; but these then have a common cover, obtained by intersecting their fundamental groups, which is still a finite cover of $N$.)
\end{proof}

\par Now, write $I_{0}$ for the reflection in the hyperplane $H_{0}$: we have $\Gamma_{2} = I_{0}^{-1}\Gamma_{1}^{\vphantom{-1}}I_{0}^{\vphantom{-1}}$, and we note that $I_{0}\in\PO_{f}(K)$ since $H_{0}$ has a normal vector $v_{0}\in K^{n+1}$ (cf.~\S\ref{sec:ABT}).

The group $\Gamma_{i}$ is a discrete convex-co-compact group acting on $\Hyp^{n}$ (for both $i=1$ and $i=2$).
Choosing some basepoint $x_{0} \in H_{0}$ we may construct Dirichlet fundamental domains $\FD$, $\FD_{1}$ and $\FD_{2}$ (about $x_{0}$) for $\Gamma$, $\Gamma_{1}$ and $\Gamma_{2}$ respectively.
By the doubling construction it is evident that $\FD$ will be invariant under the reflection $I_{0}$.
Thus we can view $\FD$ as a union $F_{1}\cup F_{2}$ of the two pieces exchanged by $I_{0}$, and each $F_{i}$ satisfies the inclusion $F_{i}\subseteq \FD_{i}$.
We also have the inclusion $\FD\subseteq\FD_{i}$ ($i=1,2$) since $\Gamma_{i}<\Gamma$.
The intersection $\FD_{1}\cap \FD_{2}$ is a Dirichlet fundamental domain (at $x_{0}$) for the group $ \langle \Gamma_{1}, \Gamma_{2} \rangle$ and so we have $\FD \subseteq \FD_{1} \cap \FD_{2}$ since $\langle \Gamma_{1}, \Gamma_{2} \rangle \leq \Gamma$.

\par The domains $\FD_{i}$ may be decomposed into two parts separated by $H_{0}$.
By $\tilde{\FD_{i}}$ we mean the part containing the ends corresponding to the boundary components whose lifts to $\Hyp^{n}$ are $H_{1}$ and $\gamma_{j}(H_{j})$ ($j=0,1$) if either of the latter exist (cf.~\S\ref{sec:ABT}).
If $H_{0}$ intersects a bounding hyperplane of $\FD_{i}$ then it must do so orthogonally and hence we have the inclusion $I_{0}(\tilde{\FD_{i}}) \subseteq \FD_{i}\setminus\tilde{\FD_{i}}$.
Let the set $J$ be the index set $\{1, \ldots,\ell - 1\}$, and if $2\in J$ or $3\in J$ then denote the two hyperplanes $H_{2}=\gamma_{0}(H_{0})$ and $H_{3}=\gamma_{1}(H_{1})$.
Write $I_{j}$ for the reflection in $H_{j}$ (whenever $H_{j}$ exists), and write $H_{j}^{-}$ for the half-space bounded by $H_{j}$ and containing $H_{0}$.
The set $\FD$ is the intersection
\[ \FD = (\FD_{1} \cap \FD_{2}) \cap \bigcap_{j\in J} H_{j}^{-} \cap \bigcap_{j\in J} I_{0}(H_{j}^{-}), \]
and the group $\Gamma$ may be given by the generators
\begin{equation}\label{eqn:Mgenerators}
	\Gamma = \left\langle \Gamma_{1} , \quad  I_{0}^{-1} \Gamma_{1}^{\vphantom{-1}} I_{0}^{\vphantom{-1}},   \quad I_{j}^{-1}I^{\vphantom{-1}}_{0} \;\; (j\in J)  \right\rangle . 
\end{equation}
Geometrically, the conjugate copy of $\Gamma_{1}$ by $I_{0}$ corresponds to matching up two copies of the fundamental domain $\FD_{1}$ at $H_{0}$, and the elements $I_{j}^{-1}I_{0}^{\vphantom{-1}}$  correspond to the gluing isometries for the remaining sides of the fundamental domain $\FD$ corresponding to the $H_{1}$ and $\gamma_{j}(H_{j})$; equivalently the ends of $\FD_{1}$ and $\FD_{2}$ away from $H_{0}$.

\par We finally note that since the $H_{j}$ are $K$-rational hyperplanes, their corresponding reflections $I_{j}$ do indeed lie in $\PO_{f}(K)$.
\end{proof}

\begin{proof}[Proof of Theorem~\ref{thm:quasiarithsystole} (cf.\ p.\pageref{thm:quasiarithsystole})]\label{pf:mainthmproof}
	We have already seen in Prop.~\ref{prop:QuasiArith} that any lattice constructed as in \S\ref{sec:ABT} is quasi-arithmetic.
	That there are infinitely many commensurability classes of such lattices has already been demonstrated in the literature \cite[5.2]{BT:Systoles}, and follows from that fact that when these lattices are non-arithmetic, their commensurator is also a lattice in $\PO_{f}(\R)$ \cite[(B), p.298]{Margulis}: as the systole length in the construction of \S\ref{sec:ABT} decreases, one obtains a sequence of non-arithmetic lattices $\Gamma_{m}$ (for $m\in\N$), and if these were commensurable then Margulis' theorem would imply that we have some maximal lattice $\Gamma$ with $\Gamma_{m}\subset\Gamma$ for every $m$.
But since the $\Gamma_{m}$ have decreasing systole lengths, this is not possible.
\end{proof}

\section{Gromov--Piatetski-Shapiro manifolds}\label{sec:GPS}

Gromov and Piatetski-Shapiro's construction of non-arthmetic lattices in $\PO(n,1)$, from hybrid hyperbolic manifolds, is well-known and so we do not revisit it in complete detail; but we present below enough of their construction to examine the quasi-arithmeticity properties of the resulting lattices.
The reader interested in the details of the construction should find the original article accessible even to the non-expert \cite{GPS}.

\par After recalling the construction we turn to proving that these lattices are not quasi-arithmetic (\S\ref{sec:GPSnonQuasiArith}).

\subsection{The construction of Gromov and Piatetski-Shaprio}\label{sec:GPSConstruction}

In order to obtain non-arithmetic lattices $\Gamma<\PO(n,1)$ Gromov and Piatetski-Shapiro first consider two torsion-free co-compact arithmetic lattices $\tilde{\Gamma}_{1}<\PO_{f_1}(\R)$ and $\tilde{\Gamma}_{2}<\PO_{f_2}(\R)$ over a field $K$, such that the two quotient manifolds $\tilde{M}_{i}=\tilde{\Gamma}_{i}\backslash\Hyp^{n}$ ($i=1,2$) each contain a co-dimension $1$ closed submanifold $M_{0}^{(i)}$ ($i=1,2$) with an isometry $\psi\colon M_{0}^{(1)} \to M_{0}^{(2)}$.
They show that if the forms $f_{1}$ and $f_{2}$ are not similar over $K$ then the manifolds $\tilde{M}_{1}$ and $\tilde{M}_{2}$ are not commensurable: we assume that the forms are indeed not similar.
The manifold $M$ obtained by cutting each of the $\tilde{M}_{i}$ along $M_{0}^{(i)}$ and gluing the two together via the boundary isometry $\psi$, would be a cover of $M_{1}$ and $M_{2}$, if $M$ was arithmetic \cite[0.2]{GPS}.
However in light of the \emph{non}-commensurability of $M_{1}$ and $M_{2}$, the glued manifold $M$ cannot be a common cover of these spaces and so must be non-arithmetic.
Thus $\Gamma = \pi_{1}(M)$ is a non-arithmetic lattice in $\PO(n,1)$.

\begin{remark}
	The manifold $M$ is called a \emph{hybrid} manifold, obtained by a process of \emph{interbreeding}.
	For the manifolds in \S\ref{sec:ABT} we call the process \emph{inbreeding} since the two manifolds glued together arise from the same arithmetic lattice.
\end{remark}

\begin{definition}
	By a \emph{Gromov--Piatetski-Shapiro lattice} (\gps\ lattice for short) is meant a non-arithmetic lattice $\Gamma<\PO(n,1)$ obtained as $\pi_{1}(M)$ by the above procedure; that is, by taking two non-similar admissible quadratic forms $f_{1}$ and $f_{2}$ over a common field $K$, containing a common subform $f_{0}$ giving rise to an embedded codimension $1$ hypersurface.
\end{definition}

\subsection{Non-quasi-arithmeticity of GPS lattices}\label{sec:GPSnonQuasiArith}

In this section we will prove Theorem~\ref{thm:GPSnonQuasiArith} (cf.\ p.\pageref{thm:GPSnonQuasiArith}); i.e., that \gps\ lattices are not quasi-arithmetic.

\par Let $\Gamma < \PO(n,1)$ be a \gps\  lattice.
So, there is a hyperbolic manifold $M$ with $\Gamma=\pi_{1}(M)$, such that $M=M_{1}\cup_{M_0} M_{2}$ with $M_{1}$ and $M_{2}$ arising as manifolds with boundary from closed arithmetic manifolds $\tilde{M}_{i} = \Lambda_{i}\backslash\Hyp^{n}$.
As in \S\ref{sec:GPSConstruction}, suppose that $\Lambda_{i} \leq \PO_{f_i}(\Int_{K})$ (where we identify $\PO_{f_i}(\R)$ with its image in $\PO(n,1)$ via an isomorphism $\phi_{i}$ over some finite extension of $K$).
Write $\Gamma_{i}=\pi_{1}(M_{i})$ for $i=1,2$.
Then each $\Gamma_{i}$ is Zariski dense in $\PO(n,1) =  \PO_{f_n}(\R)$ \cite[0.1]{GPS}.

\par Now suppose that there is another admissible pair $(K',f')$ for $\PO(n,1)$ and such that $\Gamma \subset \PO_{f'}(K')$ (where again we identify the corresponding groups of real points by an isomorphism $\phi$ over an extension of $K'$).
Then we would have the configuration
\begin{equation}\label{eqn:GPSConfiguration}
\begin{tikzcd}%
	\phi_{1}^{-1}(\Gamma_{1}^{\vphantom{-1}}) \arrow[hook]{r}	&	{\PO_{f_1}(\Int_{K})}	\arrow[hook]{r}	&  {\PO_{f_1}(\R)}\arrow{dr}{\phi_{1}} &	& {\Gamma_{1}=\pi_{1}(M_{1})}\arrow[hook]{dl} 	\\
	{\Gamma} \arrow[hook]{r}	& 	{\PO_{f'}(K')}	 \arrow[hook]{r}	&  {\PO_{f'}(\R)}\arrow{r}{\phi}	& 	{\PO(n,1)}	& {}\\
	\phi_{2}^{-1}(\Gamma_{2}^{\vphantom{-1}}) \arrow[hook]{r}	&	{\PO_{f_2}(\Int_{K})}	\arrow[hook]{r}	&  {\PO_{f_2}(\R)}\arrow{ur}{\phi_{2}} &	& {\Gamma_{2}=\pi_{1}(M_{2})}\arrow[hook]{ul}
\end{tikzcd}.
\end{equation}

\begin{proposition}
	Assuming the above configuration, we have $K=K'$ and for each $i=1,2$, there is an  isomorphism $\PO_{f'}(K)\cong \PO_{f_i}(K)$.
\end{proposition}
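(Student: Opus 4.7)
The plan is to combine Vinberg's theorem on the minimal field of definition of a Zariski-dense subgroup with the admissibility hypothesis on $(K', f')$.

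First, as noted just before the proposition (following \cite[0.1]{GPS}), each $\Gamma_i$ is Zariski dense in $\PO(n,1)$. Apply Vinberg's theorem to obtain, for each $i=1,2$, a unique minimal subfield $K_i \subseteq \R$ and a $K_i$-algebraic form $\G^{(i)}$ of $\PO(n,1)$ such that $\Gamma_i \subseteq \G^{(i)}(K_i)$, with the further property that any realisation of $\Gamma_i$ inside $H(F)$ for some $F$-form $H$ must satisfy $K_i \subseteq F$ and $H \cong \G^{(i)} \otimes_{K_i} F$. Applying this to the two inclusions $\Gamma_i \subseteq \PO_{f_i}(K)$ and $\Gamma_i \subseteq \PO_{f'}(K')$ provided by the configuration \eqref{eqn:GPSConfiguration} gives $K_i \subseteq K \cap K'$ together with isomorphisms $\PO_{f_i} \cong \G^{(i)} \otimes_{K_i} K$ and $\PO_{f'} \cong \G^{(i)} \otimes_{K_i} K'$.

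Next I would argue that $K_i = K$. The arithmetic lattice $\tilde{\Gamma}_i \subseteq \PO_{f_i}(\Int_{K})$ has adjoint trace field equal to $K$, and although $\Gamma_i$ is not of finite index in $\tilde{\Gamma}_i$, it is Zariski dense in $\PO_{f_i}$ and contains enough elements (inherited from loops in $\tilde{M}_i$ passing through the full-volume piece $M_i$) so that $\Q\bigl(\mathrm{tr}(\mathrm{Ad}(\gamma)) : \gamma \in \Gamma_i\bigr) = K$. By Vinberg's identification of the minimal field with the adjoint trace field, this forces $K_i = K$, and hence $\PO_{f'} \cong \PO_{f_i} \otimes_K K'$.

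The equality $K = K'$ will follow from the admissibility condition. The $K'$-isomorphism $\PO_{f'} \cong \PO_{f_i} \otimes_K K'$ means that $f'$ is $K'$-similar to the base change of $f_i$, say $f' = \lambda \cdot (f_i \otimes_K K')$ up to $K'$-equivalence of quadratic forms, for some $\lambda \in K'^{\times}$. Suppose, for contradiction, that $K \subsetneq K'$; since $K'$ is totally real, there are $[K':K] \geq 2$ embeddings of $K'$ into $\R$ extending the fixed embedding of $K$, so there is a non-trivial embedding $\sigma\colon K' \hookrightarrow \R$ restricting to the identity on $K$. The conjugate $f'^{\sigma}$ is then, up to an $\R$-change of basis, equal to $\sigma(\lambda) \cdot f_i$, which has signature $(n,1)$ or $(1,n)$ and in particular is not positive definite. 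This contradicts the admissibility of $(K', f')$; hence $K = K'$. The displayed isomorphism then reduces to $\PO_{f'} \cong \PO_{f_i}$ over $K$ for each $i = 1,2$, proving the proposition.

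The main obstacle will be the step $K_i = K$: one must confirm that the generators of $\Gamma_i = \pi_1(M_i)$, coming from the geometry of a codimension-zero piece of $\tilde{M}_i$, already include elements whose adjoint traces generate $K$ over $\Q$, so that the minimal field of definition of $\Gamma_i$ does not drop below $K$. The rest of the argument is then an application of Vinberg's uniqueness together with an admissibility-constrained Galois-theoretic computation.
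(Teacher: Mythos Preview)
Your approach via Vinberg's theorem is correct and is essentially the same idea as the paper's, though the paper is terser: it simply asserts ``by minimality of $K$'' that $K\subseteq K'$, then writes down the restriction-of-scalars decomposition over $K'/K$ to force $[K':K]=1$ by admissibility, and finally uses Zariski density of each $\Gamma_i$ in both $\PO_{f'}$ and $\PO_{f_i}$ to conclude that the real isomorphism $\phi^{-1}\circ\phi_i$ is in fact a $K$-isomorphism. Your route through Vinberg's minimal field makes the ``minimality of $K$'' step explicit and puts the argument in a cleaner logical order; in the paper, the product formula already uses $f_i$ in place of $f'$, which tacitly presupposes the $K'$-isomorphism $\PO_{f'}\cong\PO_{f_i}\otimes_K K'$ that is only established afterwards.

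The obstacle you flag, namely $K_i=K$, is not a genuine obstacle and does not require the trace-field computation you suggest. The pair $(K,f_i)$ is itself admissible (it is the pair giving rise to the arithmetic lattice $\tilde\Gamma_i$), so the very argument you give for $(K',f')$ applies verbatim: from $\PO_{f_i}\cong\G^{(i)}\otimes_{K_i}K$ and admissibility of $(K,f_i)$, any embedding $K\hookrightarrow\R$ restricting to the identity on $K_i$ produces a non-compact real factor, hence $[K:K_i]=1$. Running the same argument for $(K',f')$ gives $[K':K_i]=1$, and therefore $K=K_i=K'$ in one stroke. This bypasses entirely the question of whether $\Gamma_i$ contains ``enough'' elements of $\tilde\Gamma_i$ to recover the full trace field, which, as you rightly sense, is less immediate for an infinite-index Zariski-dense subgroup.
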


\begin{proof}
By minimality of $K$ we must have $K\subseteq K'$.
Then, examining $\PO_{f'}$, we have
\[ \PO_{f'}(K' \otimes \R) \cong  \!\! \prod_{\sigma\in\Gal(K'/K)} \biggl(  \prod_{\tau\in\Gal(K/\Q)} \!\! \PO_{(f_{i}^{\sigma})^{\tau}_{\vphantom{i}}}(\R)  \biggr) ; \]
and then since $\PO_{f'}$ is admissible there can be only one non-compact factor on the right hand side of this isomorphism.
Thus $\Gal(K'/K)$ can contain only precisely one element and so $K=K'$.

\par In what follows, $\Zclosure{A}{F}$ will denote the Zariski $F$-closure of a set $A$ with respect to the Zariski $F$-topology on $\PO_{f}(F)$, for $F$ a field.
	That is, $\Zclosure{A}{F}$ is the smallest closed set in $\PO_{f}(F)$ containing $A$, in the topology generated by sets of zeroes of polynomials with coefficients in $F$.
\par Fix $i=1$ or $i=2$.
Now, $\phi^{-1}(\Gamma_{i})$ is Zariski dense in $\PO_{f'}(K)$; for if not then $\phi\bigl(\Zclosure{\phi^{-1}(\Gamma_{i})}{K}\bigr)$ would be contained in a $K$-closed proper subgroup of $\PO(n,1)$ containing $\Gamma_{i}$.
But this is also an $\R$-closed subgroup, which is impossible by Zariski density of $\Gamma_{i}$ in $\PO(n,1)$.
Similarly, $\phi^{-1}_{1}(\Gamma_{i})$ is also Zariski dense in $\PO_{f_i}(K)$.
Thus we have an isomorphism $\PO_{f_1}(K)\cong \PO_{f'}(K)$ by composing $\phi^{-1}$ and $\phi_{1}$.

\end{proof}

\noindent Thus if $\Gamma$ is a quasi-arithmetic lattice then it must be contained in $\PO_{f_i}(K)$ for both $i=1,2$, and these $\PO_{f_i}$ must be $K$-isomorphic, which is impossible since the forms $f_{i}$ are not similar \cite[2.6]{GPS}.

This concludes the proof of Theorem~\ref{thm:GPSnonQuasiArith}.{\hfill\ensuremath{\square}}

\subsection{Sub-arithmeticity}\label{sec:Sub-Arith}

In the original article describing \gps\ lattices, the authors define a \emph{sub-arithmetic} group $\Gamma<\PO(n,1)$ to be a discrete group that is Zariski dense and such that for some arithmetic lattice $\Lambda<\PO(n,1)$, we have $|\Gamma : \Lambda \cap \Gamma| < \infty$ (i.e., $\Gamma$ is virtually contained in an arithmetic lattice) \cite[0.4]{GPS}.
A sub-arithmetic group need not be a lattice and so sub-arithmeticity and quasi-arithmeticity are different notions, albeit both in the spirit of being close to arithmetic.

\par Both constructions outlined here, of non-arithmetic lattices in $\PO(n,1)$, are based on gluing manifolds with sub-arithmetic fundamental groups.
At present the only general methods for constructing non-arithmetic lattices in $\PO(n,1)$ are based on gluing constructions involving sub-arithmetic discrete groups and so the question raised by Gromov and Piatetski-Shapiro---namely, whether there exist non-arithmetic lattices that are \emph{not} constructed from sub-arithmetic subgroups---remains open \cite[0.4]{GPS}.

\providecommand{\bysame}{\leavevmode\hbox to3em{\hrulefill}\thinspace}
\providecommand{\MR}{\relax\ifhmode\unskip\space\fi MR }
\providecommand{\MRhref}[2]{%
  \href{http://www.ams.org/mathscinet-getitem?mr=#1}{#2}
}
\providecommand{\href}[2]{#2}

\end{document}